\title{Towards an Elementary Formulation of the Riemann Hypothesis in Terms of Permutation Groups}
\newtheorem{theorem}{Theorem}
\newtheorem*{thm*}{Theorem}
\newtheorem{lemma}{Lemma}[subsection]
\author{Will Cavendish and Jacob Tsimerman}
\numberwithin{mytheorem}{subsection}
\begin{document}
\maketitle

\begin{abstract}  This paper investigates the relationship between the Riemann hypothesis and the statement $\forall n, ~g(n) \le e^{\sqrt{p_n}}$, where $g(n)$ is the maximum order of an element of $S_n$, the symmetric group on $n$ elements, and $p_n$ is the $n$-th prime. We show this inequality holds under the Riemann Hypothesis. We also make progress towards establishing the converse by proving $\exists n,~g(n)>e^{\sqrt{p_n}}$ if the Riemann Hypothesis is false and the supremum of the set of the real parts of the Riemann zeta function's zeros $\sup \{\Re(\rho)~|~\zeta(\rho) = 0\}$ is not equal to 1. 
\end{abstract}

\noindent In a series of papers starting in the late 1960s (e.g., \cite{Nic1968}, \cite{MNR1988}, \cite{DN2019}), Nicolas and his collaborators established an intriguing relationship between the Riemann Hypothesis and the theory of permutation groups. One of the most striking, the central result of \cite{DN2019}, is that the Riemann Hypothesis is equivalent to the statement 
 \[
g(n)\le e^{\sqrt{\mathrm{li}^{-1}(n)}} \textrm{~ for all } n.
\]
Here $g:\mathbb{N}\to \mathbb{N}$ denote Landau's function, the function that takes $n\in \mathbb{N}$ to the maximum order of an element of $S_n$, the symmetric group on $n$ elements.

Note that by the prime number theorem, $\mathrm{li}^{-1}(n)$ is approximately equal to the $n$-th prime number $p_n$. This brings us to the central question of this paper: Can $\mathrm{li}^{-1}(n)$ be replaced by $p_n$ in the above equivalence? In what follows, we give a partial answer to this question.

\begin{theorem} \label{mainthm} If the Riemann Hypothesis is true, then 
\[
g(n)\le e^{\sqrt{p_n}}
\]
for all $n\ge 1$.
\end{theorem}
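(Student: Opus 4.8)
The plan is to pass through the equivalence of \cite{DN2019}: under RH one has $\log g(n)\le\sqrt{li^{-1}(n)}$ for all $n$, so it suffices to compare $li^{-1}(n)$ with $p_n$. Writing $\pi$ for the prime-counting function (so $\pi(p_n)=n$) and using that $li$ is increasing, $li^{-1}(n)\le p_n$ is equivalent to $\pi(p_n)\le li(p_n)$; this holds throughout the (dominant) range in which $\pi$ has not yet overtaken $li$, and there the theorem is immediate. The content lies in the ``Littlewood'' range $\pi(p_n)>li(p_n)$. Assuming RH, Schoenfeld's estimate $|\pi(x)-li(x)|\le\frac{\sqrt{x}\,\log x}{8\pi}$ bounds $n-li(p_n)$, and a mean-value bound for $li^{-1}$ then gives $0<li^{-1}(n)-p_n\ll\sqrt{p_n}\,(\log p_n)^{2}$, hence $0<\sqrt{li^{-1}(n)}-\sqrt{p_n}\ll(\log p_n)^{2}$. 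Thus, after a finite check of small $n$, the theorem reduces to a quantitative sharpening of \cite{DN2019}: under RH, $\sqrt{li^{-1}(n)}-\log g(n)$ should eventually exceed $(\log p_n)^{2}$.

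To get that sharpening I would use the structure of the extremal configuration. Recall $g(n)$ is the largest $\prod_i p_i^{a_i}$ over prime powers with distinct bases and $\sum_i p_i^{a_i}\le n$; the optimum agrees, up to lower-order perturbations, with the prime powers $\le m$, giving $\mathrm{lcm}(1,\dots,m)=e^{\psi(m)}$ for the largest $m$ with $c(m):=\sum_{p\le m}p^{\lfloor\log m/\log p\rfloor}\le n$, where $\psi$ is the Chebyshev function. So $\log g(n)=\psi(m)+O(\log m)$ and $n\ge c(m)$, and it is enough to prove $\sqrt{li^{-1}(c(m))}\ge\psi(m)+c\sqrt m$ for an absolute $c>0$, i.e. $c(m)\ge li(\psi(m)^{2})+c'\,m^{3/2}/\log m$. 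Write $c(m)=\sum_{p\le m}p+D(m)$. The ``prime-power tax'' $D(m)=\sum_{p\le\sqrt m}\big(p^{\lfloor\log m/\log p\rfloor}-p\big)\sim\tfrac23\,m^{3/2}/\log m$ is a clean positive main term. For the piece $\sum_{p\le m}p$, Abel summation against $d\theta$ (with $\theta$ the other Chebyshev function) produces the same large oscillating boundary term $\frac{m(\theta(m)-m)}{\log m}$ that appears in the expansion of $li(\theta(m)^{2})$, so these cancel, leaving the smooth integral $-\int_{2}^{m}(\theta(t)-t)\,\tfrac{\log t-1}{(\log t)^{2}}\,dt=\tfrac23\,m^{3/2}/\log m+(\text{small oscillation})$, whose positivity one pins down by inserting $\theta(t)=t-\sqrt t+(\text{zero sum})$ and bounding the oscillatory part with $\sum_\rho|\rho|^{-2}=2+\gamma-\log 4\pi\approx0.046$ ($\gamma$ Euler's constant). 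Collecting terms, $c(m)\ge li(\psi(m)^{2})+\Omega(m^{3/2}/\log m)$, so $\sqrt{li^{-1}(n)}\ge\sqrt{li^{-1}(c(m))}\ge\psi(m)+\Omega(\sqrt m)=\log g(n)+\Omega\!\big(\sqrt{\log g(n)}\big)$. Since $\sqrt{\log g(n)}\asymp p_n^{1/4}$ dominates $(\log p_n)^{2}$, the theorem follows for $n$ past an explicit bound, the remaining cases being a finite unconditional computation from tables of $g$.

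I expect the $\Omega(m^{3/2}/\log m)$ gain over \cite{DN2019} to be the main obstacle. The cancellation of the oscillating boundary term is what makes it tractable, but the surviving constants are uncomfortably close: the $\tfrac23$ coming from the tax and from the smooth integral (against the tiny $\sum_\rho|\rho|^{-2}$-sized oscillation) has to beat the $1+o(1)$ cost of replacing $\theta(m)^{2}$ by $\psi(m)^{2}$ inside $li$. So the estimates must stay fairly sharp throughout, and one has to verify that the lower-order perturbations of the extremal configuration---a few primes just beyond $m$, a possible bump of one small prime power---do not consume the margin.
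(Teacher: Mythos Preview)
Your overall architecture is exactly the paper's: split according to whether $\pi(p_n)\le li(p_n)$; in the affirmative range use the qualitative result of \cite{DN2019} directly; in the complementary range use Schoenfeld plus the mean value theorem to get $\sqrt{li^{-1}(n)}-\sqrt{p_n}=O\big((\log n)^{2}\big)$, and then beat this with a quantitative lower bound on $\sqrt{li^{-1}(n)}-\log g(n)$ of size $\asymp (n\log n)^{1/4}$. The paper makes the cutoff explicit at $n=10^{10}$, invoking Kotnik/B\"uthe for the first range.

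The one substantive difference is that you propose to \emph{prove} the quantitative gap from scratch via the extremal configuration, whereas the paper simply quotes it: Theorem~1.1(ii) of \cite{DN2019} already gives, under RH,
\[
\frac{\sqrt{li^{-1}(n)}-\log g(n)}{(n\log n)^{1/4}}\ \ge\ \frac{2-\sqrt{2}}{3}-c-\frac{0.43\,\log\log n}{\log n},
\qquad c=\sum_{\rho}\frac{1}{|\rho(\rho+1)|}\approx 0.0461,
\]
so the left side exceeds $0.08$ for $n>10^{10}$, which is more than enough against $\frac{\sqrt{2}}{16\pi}\log^{2}(2n\log n)$. Your sketched derivation (prime-power ``tax'' $\sim\tfrac23 m^{3/2}/\log m$, Abel summation of $\sum_{p\le m}p$, cancellation of the oscillating boundary term, control of the zero sum) is precisely the machinery behind \cite{MNR1988} and \cite{DN2019}, and with care it yields the same constant $\frac{2-\sqrt{2}}{3}-c$; your worry about the ``$1+o(1)$ cost of replacing $\theta(m)^2$ by $\psi(m)^2$'' is what produces the $-\sqrt{2}$ in the numerator. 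So your route is correct but redundant: rather than reproving that theorem, cite it, and the remaining argument is the short comparison you already outlined.
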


\begin{theorem}\label{thm2} Let $\zeta$ denote the Riemann zeta function. If the Riemann Hypothesis is false and $\sup \{\Re(\rho)~|~\zeta(\rho)=0\}\ne 1$, then there exists $n$ such that $g(n)> e^{\sqrt{p_n}}$.
\end{theorem}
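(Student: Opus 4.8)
The plan is to obtain the conclusion from a quantitative version of the argument of \cite{DN2019} together with the conditional prime number theorem, exploiting that under the hypothesis the relevant oscillation is too large to be cancelled by the gap between $p_n$ and $\mathrm{li}^{-1}(n)$. Set $\Theta=\sup\{\Re(\rho)\mid\zeta(\rho)=0\}$, so that by hypothesis $\tfrac12<\Theta<1$; write $\mathrm{li}$ for the logarithmic integral and $\psi,\vartheta$ for the Chebyshev functions, and split
\[
\log g(n)-\sqrt{p_n}=\bigl(\log g(n)-\sqrt{\mathrm{li}^{-1}(n)}\bigr)-\bigl(\sqrt{p_n}-\sqrt{\mathrm{li}^{-1}(n)}\bigr).
\]
I would show that the first term is $\gg\bigl(\mathrm{li}^{-1}(n)\bigr)^{\Theta/2-\varepsilon}$ for infinitely many $n$, while the second is $\ll\bigl(\mathrm{li}^{-1}(n)\bigr)^{\Theta-1/2+\varepsilon}$ for \emph{all} $n$; since $\Theta<1$ forces $\Theta-\tfrac12<\tfrac\Theta2$, the first term dominates and $\log g(n)-\sqrt{p_n}\to+\infty$ along that sequence.

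For the second term, since $\pi(p_n)=n$ and $\zeta$ has no zero with $\Re(s)>\Theta$, the effective prime number theorem gives $\pi(x)-\mathrm{li}(x)\ll x^{\Theta}\log^{2}x$, whence, using $\mathrm{li}(p_n)=n-(\pi(p_n)-\mathrm{li}(p_n))$ and $p_n\asymp\mathrm{li}^{-1}(n)$,
\[
\bigl|\,p_n-\mathrm{li}^{-1}(n)\,\bigr|\ \ll\ p_n^{\,\Theta+\varepsilon},\qquad\text{so}\qquad \bigl|\sqrt{p_n}-\sqrt{\mathrm{li}^{-1}(n)}\bigr|\ \ll\ \bigl(\mathrm{li}^{-1}(n)\bigr)^{\Theta-1/2+\varepsilon}.
\]

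For the first term I would test $g$ against the permutation that is a product of one $p$-cycle for every prime $p\le x$; it lies in $S_n$ for $n=\sum_{p\le x}p$ and has order $\prod_{p\le x}p$, so $\log g(n)\ge\vartheta(x)$, and here $x\asymp\sqrt{\mathrm{li}^{-1}(n)}$ since $n\sim\mathrm{li}(x^{2})$. Using Riemann's explicit formula for $\pi$ gives $n=\sum_{p\le x}p=\mathrm{li}(x^{2})-\sum_\rho\mathrm{li}(x^{\rho+1})+O(x^{3/2})$; inverting $\mathrm{li}$, expanding $\mathrm{li}(x^{\rho+1})$, and simplifying with $\tfrac1\rho-\tfrac1{\rho+1}=\tfrac1{\rho(\rho+1)}$ and $\psi_1(x):=\int_0^x\psi(t)\,dt=\tfrac{x^{2}}{2}-\sum_\rho\tfrac{x^{\rho+1}}{\rho(\rho+1)}+O(x)$, one arrives at
\[
\log g(n)-\sqrt{\mathrm{li}^{-1}(n)}\ \ge\ \frac{\psi_1(x)-x^{2}/2}{x}\ +\ O\!\Bigl(\frac{x^{\Theta}}{\log x}\Bigr)\ +\ O\bigl(\sqrt{x}\,\log^{2}x\bigr).
\]
Since $\zeta$ has zeros with real part arbitrarily close to $\Theta>\tfrac12$, Landau's oscillation theorem yields $\psi_1(x)-x^{2}/2=\Omega_+\bigl(x^{1+\Theta-\varepsilon}\bigr)$, so along a sequence of $x$ the main term is $\ge x^{\Theta-\varepsilon}$, i.e.\ the first term above is $\gg(\mathrm{li}^{-1}(n))^{\Theta/2-\varepsilon}$; combining with the bound on the second term completes the argument.

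The main obstacle is the last display, and more precisely the error $O(x^{\Theta}/\log x)$: a priori it exceeds the guaranteed oscillation $x^{\Theta-\varepsilon}$ of the main term, so one must check that on the sparse sequence produced by Landau's theorem it remains a strictly smaller fraction of the main term. The point is that the surviving secondary sums are built from the zeta zeros in exactly the same way as $\psi_1(x)-x^{2}/2$ — they are obtained from it by averaging operators of the form $f\mapsto \tfrac1x\int_0^x f$ (this is how $\sum_\rho x^\rho/(\rho(\rho+1)^{2})$ relates to $\sum_\rho x^\rho/(\rho(\rho+1))$) — so they cannot dominate it on the sequence where it is near its extremal positive values; making this comparison rigorous, or equivalently replacing the benchmark permutation by the exact description of the champion numbers for $g$ in \cite{MNR1988}, \cite{DN2019} (where $\log g$ equals a prime sum up to a controlled error, so the display becomes an identity), is the technical heart. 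The hypothesis $\Theta\neq1$ enters precisely as the inequality $\Theta-\tfrac12<\tfrac\Theta2$, which makes the $p_n$-versus-$\mathrm{li}^{-1}(n)$ discrepancy negligible against the violation of the criterion of \cite{DN2019}; if zeros accumulated at the line $\Re(s)=1$ the two quantities would be of the same order and this transfer would break down.
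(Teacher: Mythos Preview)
Your overall architecture matches the paper's: produce an $\Omega_+$ oscillation at exponent $\Theta$ (in the $x$-scale) coming from the zeta zeros, and show it dominates a prime-number-theorem error at exponent $2\Theta-1$, using $\Theta<1$ to separate them. The gap is exactly the one you flag. In your display
\[
\log g(n)-\sqrt{\mathrm{li}^{-1}(n)}\ \ge\ \frac{\psi_1(x)-x^2/2}{x}\ +\ O\!\Bigl(\frac{x^{\Theta}}{\log x}\Bigr)\ +\ O\bigl(\sqrt{x}\,\log^{2}x\bigr),
\]
the internal error $O(x^{\Theta}/\log x)$ sits at the same exponent as the guaranteed oscillation $\Omega_+(x^{\Theta-\varepsilon})$ of the main term, so you cannot conclude that the left side is $\Omega_+$ of anything. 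Your averaging heuristic --- that $\sum_\rho x^\rho/(\rho(\rho+1)^2)$ is an integral average of $\sum_\rho x^\rho/(\rho(\rho+1))$ and therefore cannot exceed it at its positive extremes --- is not a proof: an average of an oscillating function of order $x^{\Theta}$ is itself of order $x^{\Theta}$, and nothing prevents the two from being comparable on your sparse sequence. Replacing $\Theta-\varepsilon$ by $\Theta$ does not help either, since the supremum $\Theta$ need not be attained by any zero.

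The paper avoids this by changing the target: instead of $\log g(n)-\sqrt{p_n}$, it shows $\pi(\log^2 g(n))-n>0$ directly, pivoting through $\mathrm{Li}(\log^2 g(n))$. The key input is that Massias--Nicolas--Robin (their Lemma~C(iii), combined with their equation~(28) for the champion numbers $N_\rho$) already gives the clean statement $\mathrm{Li}(\log^2 g(n))-n=\Omega_+\bigl(x_1^{\Theta+1}/\log x_1\bigr)$ with no competing same-exponent error --- precisely because it is proved using the exact champion permutations rather than your naive product-of-primes test element. This is exactly the fix you gesture at in your last paragraph; the paper simply quotes it. Against this, the remaining discrepancy $|\pi-\mathrm{Li}|(\log^2 g(n))=O\bigl(x_1^{2\Theta}\log x_1\bigr)$ has strictly smaller exponent since $2\Theta<\Theta+1$, and the argument closes. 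So your plan is right and your identification of where $\Theta<1$ enters is correct, but the step you label ``the technical heart'' is not optional, and as written the proof is incomplete.
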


\noindent These results show that question of whether the Riemann Hypothesis is equivalent to $\forall n,~g(n)\le e^{\sqrt{p_n}}$ comes down to the following unsolved problem: Does $g(n)$ take values greater than $e^{\sqrt{p_n}}$ in the case that the Riemann Hypothesis is ``as false as possible," i.e., when there exists a sequence $\{\rho_i\}$ of zeros of $\zeta(s)$  such that $\lim_{i\to \infty} \Re(\rho_i) = 1$? \newline

\noindent \textbf{Acknowledgements.} We are grateful to Jean-Louis Nicolas for his encouragement and helpful correspondence. We are also grateful to the referees for their feedback on earlier drafts of this paper.


\section{Bounding $g(n)$ under the Riemann Hypothesis}

We begin by establishing Theorem \ref{mainthm}. Following \cite{DN2019}, we define 

\begin{equation}\label{andefinition}
a_n=\frac{\sqrt{\mathrm{li}^{-1}(n)}-\log g(n)}{(n~\log n)^{1/4}}
\end{equation}

\noindent The central ingredient in our proof is the following result from \cite{DN2019}:

\begin{theorem}[Theorem 1.1(ii) in \cite{DN2019}]\label{DN2019} Under the Riemann Hypothesis, 
\[
a_n\ge \frac{2-\sqrt{2}}{3}-c-\frac{0.43 \log \log n}{\log n}~\textrm{for all }n\ge 2
\]
where $c=\sum_{\rho} \frac{1}{|\rho(\rho+1)|} \approx 0.046117644421509...$  and the sum is taken over the set of non-trivial zeros of the Riemann $\zeta$ function. 
\end{theorem}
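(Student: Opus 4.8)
The plan is to translate the extremal description of $g(n)$ into sums over primes and then eliminate the auxiliary parameter between them, so that the leading prime-counting fluctuation cancels and only a once-integrated, $\rho(\rho+1)$-weighted remainder survives. I would start from the classical structure of a maximal-order element: up to a controlled discrepancy it is a product of the primes $p\le x$ together with a few small prime powers, so that $\log g(n)\approx\theta(x)=\sum_{p\le x}\log p$ while admissibility reads $n\approx\sum_{p\le x}p$. The key preliminary identity is $\sum_{p\le x}p\approx\int_2^x\frac{t}{\log t}\,dt=li(x^2)$, so that $n\approx li(x^2)$ forces $x\approx\sqrt{li^{-1}(n)}$ — this is exactly the benchmark against which $a_n$ measures $\log g(n)$. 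To leading order $\log g(n)\approx\theta(x)\approx x\approx\sqrt{li^{-1}(n)}$, and the whole theorem lives at the scale of the next correction, $\sqrt x\approx(n\log n)^{1/4}$.

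The heart of the matter is a cancellation. Writing $\theta(x)=x+\alpha(x)$, the fluctuation $\alpha$ appears twice: directly in $\log g(n)=x+\alpha(x)$, and, after partial summation, inside the budget, since
\[
\sum_{p\le x}p = li(x^2) + \frac{x}{\log x}\alpha(x) - \int_2^x \alpha(t)\,\frac{\log t-1}{\log^2 t}\,dt + O(1).
\]
Solving $n=\sum_{p\le x}p$ for $x$ and inserting into $\sqrt{li^{-1}(\cdot)}$ multiplies the budget fluctuation by $\frac{\log x}{x}$, and the term $\frac{\log x}{x}\cdot\frac{x}{\log x}\alpha(x)=\alpha(x)$ cancels the $\alpha(x)$ in $\log g(n)$. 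What survives is the once-integrated remainder $\frac{\log x}{x}\int_2^x\alpha(t)\frac{\log t-1}{\log^2 t}\,dt$. Feeding in the explicit formula $\alpha(t)=-\sum_\rho t^\rho/\rho+O(\cdots)$ under RH and integrating term by term produces $\frac{1}{x}\sum_\rho\frac{x^{\rho+1}}{\rho(\rho+1)}$; bounding this oscillating sum by its absolute value, with $|x^{\rho+1}|=x^{3/2}$ under RH, gives a quantity of size at most $c\sqrt x$. This is precisely where $c=\sum_\rho 1/|\rho(\rho+1)|$ and the $\rho(\rho+1)$ denominator enter, and it enters with a minus sign because the zeros may conspire to push $\log g(n)$ above the benchmark by as much as $c\,(n\log n)^{1/4}$.

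The positive constant $\frac{2-\sqrt2}{3}$ is deterministic and comes from the proper prime powers suppressed above. At the resolution $(n\log n)^{1/4}\approx\sqrt x$ of the theorem only the squares $p^2$ contribute: cubes and higher powers, supported on $p\le x^{1/3}$, raise $\log g$ by only $O(x^{1/3})$ and are negligible. Including the squares raises $\log g$ by $\theta(\sqrt x)\approx\sqrt x$ while consuming extra budget $\sum_{p\le\sqrt x}p^2\approx\frac{2}{3}\cdot\frac{x^{3/2}}{\log x}$, which forces a compensating decrease in $x$. The square-root threshold $\sqrt x$ governing which primes it pays to square, the cubic budget integral $\sum p^2$, and the subsequent re-solving of $n=\sum_i q_i$ for $x$ together manufacture the precise combination $\frac{2-\sqrt2}{3}$ as the net deterministic shift in $\sqrt{li^{-1}(n)}-\log g(n)$. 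The remaining secondary errors — the $\theta$-versus-$\psi$ discrepancy, the tail of higher prime powers, the $O(1)$ constants above, the discretization error in realizing the continuous optimum by an honest permutation, and the error in inverting $li$ — are then collected into the $0.43\,\log\log n/\log n$ term.

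The main obstacle is this last accounting. To land the clean constant $c$ rather than some larger multiple of it, the only place I may take absolute values is in bounding the single sum $\frac{1}{x}\sum_\rho x^{\rho+1}/(\rho(\rho+1))$; every other contribution must be shown to be strictly subordinate, of size $o\big((n\log n)^{1/4}\big)$ or absorbable into the explicit $\log\log n/\log n$ correction, and the normalization $\sqrt x$ versus $(n\log n)^{1/4}$ must be controlled to the same precision. Carrying out the prime-square balance so that it yields exactly $\frac{2-\sqrt2}{3}$ while simultaneously proving that all of these competing fluctuations stay strictly below the $(n\log n)^{1/4}$ scale is where the genuine difficulty lies.
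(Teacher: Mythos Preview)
The paper does not prove this theorem; it is quoted as Theorem~1.1(ii) of Del\'eglise--Nicolas \cite{DN2019} and used as a black box in the proof of Theorem~\ref{mainthm}. There is therefore no proof in the present paper to compare your proposal against.

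For what it is worth, your sketch is broadly faithful to the strategy actually carried out in \cite{DN2019} and its predecessor \cite{MNR1988}: the extremal permutation is modeled through an auxiliary threshold $x$ (their $x_1$), the budget constraint $n\approx\sum_{p\le x}p\approx li(x^2)$ pins $x$ near $\sqrt{li^{-1}(n)}$, partial summation converts the fluctuation of $\theta$ into a once-integrated term that the explicit formula turns into $\sum_\rho x^{\rho+1}/(\rho(\rho+1))$, and the prime-square layer supplies the deterministic constant. Your identification of $c$ and of the cancellation mechanism is correct in spirit. Where your outline is thinnest is exactly where you flag it: showing that the net effect of the square layer is \emph{exactly} $\frac{2-\sqrt2}{3}$ requires a concrete optimization carried out in those papers, not merely a balancing heuristic, and packaging all secondary errors into the precise form $0.43\,\log\log n/\log n$ valid for every $n\ge2$ demands substantial explicit computation. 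None of that can be completed from your outline without importing the work of \cite{DN2019} wholesale, which is precisely why the present paper cites the result rather than reproving it.
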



\noindent The second estimate we will need is the following:

\begin{lemma}\label{firstlemma} Under the Riemann Hypothesis,
$$\left|\mathrm{li}^{-1}(n)-p_n\right|\le  \frac{\sqrt{2}}{8\pi}\log^2(2n\log n)\sqrt{n\log n}$$
for all $n\ge 2657$.
\end{lemma}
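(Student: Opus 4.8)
The plan is to bound the difference $|\li^{-1}(n) - p_n|$ by going through the function values rather than the arguments. The key observation is that $\li^{-1}$ is monotone increasing, so it suffices to compare $n = \li(\li^{-1}(n))$ with $\li(p_n)$: if I can show $|\li(p_n) - n| \le E$ for a suitable error term $E$, then applying $\li^{-1}$ and using a lower bound on the derivative $(\li^{-1})'(x) = \log(\li^{-1}(x))$ on the relevant interval converts this into a bound on $|\li^{-1}(n) - p_n|$. Since $n = \pi(p_n)$ by definition of the $n$-th prime, the quantity $\li(p_n) - n = \li(p_n) - \pi(p_n)$ is exactly the error term in the prime number theorem evaluated at $x = p_n$, and under the Riemann Hypothesis this is controlled by the classical Schoenfeld-type bound
\[
|\li(x) - \pi(x)| \le \frac{1}{8\pi}\sqrt{x}\log^2 x \quad \text{for } x \ge 2657.
\]

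So the first step is to invoke Schoenfeld's bound at $x = p_n$, giving $|\li(p_n) - n| \le \frac{1}{8\pi}\sqrt{p_n}\log^2(p_n)$, valid once $p_n \ge 2657$, i.e. for $n$ past a small explicit threshold (and one checks $n \ge 2657$ certainly suffices, since $p_n > n$). Second, I apply the mean value theorem to $\li^{-1}$ between $n$ and $\li(p_n)$: there is a point $\xi$ between them with
\[
|\li^{-1}(n) - p_n| = |\li^{-1}(n) - \li^{-1}(\li(p_n))| = (\li^{-1})'(\xi)\,|n - \li(p_n)| = \log(\li^{-1}(\xi))\,|n - \li(p_n)|.
\]
Since $\xi$ lies between $n$ and $\li(p_n)$, and both of these are close to $n$, we have $\li^{-1}(\xi)$ close to $p_n \approx n\log n$; I would bound $\log(\li^{-1}(\xi)) \le \log(2n\log n)$ for $n$ large enough, using the crude bounds $p_n < 2n\log n$ (valid for $n \ge 2$) together with the fact that the correction from $\xi \ne n$ is lower order. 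Finally, combining the two steps and replacing $\sqrt{p_n}\log^2(p_n)$ by $\sqrt{2n\log n}\log^2(2n\log n)$ — again using $p_n < 2n\log n$ — yields exactly
\[
|\li^{-1}(n) - p_n| \le \frac{\sqrt{2}}{8\pi}\log^2(2n\log n)\sqrt{n\log n}.
\]

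The main obstacle is bookkeeping in the second step: I need a clean upper bound on $\log(\li^{-1}(\xi))$ that (a) is valid on the whole interval between $n$ and $\li(p_n)$, and (b) is small enough that, after multiplying by the Schoenfeld error, the constant comes out to exactly $\frac{\sqrt 2}{8\pi}$ with the stated argument $2n\log n$ inside the logarithms. This requires checking that the interval between $n$ and $\li(p_n)$ is genuinely narrow relative to $p_n$ — which follows because the Schoenfeld error $\frac{1}{8\pi}\sqrt{p_n}\log^2 p_n$ is $o(p_n/\log p_n)$ — and then verifying the inequality $\sqrt{p_n}\,\log^2 p_n \le \sqrt{2n\log n}\,\log^2(2n\log n)$ holds for $n \ge 2657$; both reduce to the elementary estimates $n < p_n < 2n\log n$ for $n \ge 2$ (Rosser–Schoenfeld), and the threshold $2657$ is inherited directly from Schoenfeld's bound. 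I would also double check that the endpoint $n = 2657$ is handled, either by the general argument or by a direct numerical check if the asymptotic slack is too tight there.
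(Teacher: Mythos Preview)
Your strategy is exactly the paper's: Schoenfeld at $x=p_n$ to bound $|n-\Li(p_n)|$, then a mean-value step to convert this into a bound on $|\Li^{-1}(n)-p_n|$. (You apply the MVT to $\Li^{-1}$ on $[n,\Li(p_n)]$; the paper applies it to $\Li$ on $[p_n,\Li^{-1}(n)]$. These are equivalent: by monotonicity, $\Li^{-1}(\xi)$ for your $\xi$ lies between $p_n$ and $\Li^{-1}(n)$, so the bound $\Li^{-1}(\xi)<2n\log n$ is immediate from $p_n<2n\log n$ and $\Li^{-1}(n)<2n\log n$ --- no ``correction from $\xi\ne n$'' argument is needed.)

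However, you have misquoted Schoenfeld. Under RH his bound is
\[
|\pi(x)-\Li(x)|<\frac{\sqrt{x}\,\log x}{8\pi}\qquad(x\ge 2657),
\]
with a single $\log x$, not $\log^2 x$; the $\log^2$ version is his bound for $|\psi(x)-x|$. With your input, the MVT step contributes one more logarithm and the combination gives
\[
|\Li^{-1}(n)-p_n|\le \log(2n\log n)\cdot \frac{1}{8\pi}\sqrt{p_n}\log^2 p_n \le \frac{\sqrt{2}}{8\pi}\sqrt{n\log n}\,\log^{3}(2n\log n),
\]
which is one $\log$ too many. In your ``combining'' sentence you silently dropped that MVT factor, which is why the answer appears to match the lemma; that is a second error compensating the first. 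Replace your Schoenfeld input by the correct one-$\log$ version and your argument goes through verbatim and coincides with the paper's. (Minor aside: $p_n<2n\log n$ holds for $n\ge 3$, not $n\ge 2$.)
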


\begin{proof}

Under the Riemann Hypothesis, a well-known result of Schoenfeld \cite[Corollary 1]{Sch1976} gives

\[
|\pi(x)-\mathrm{li}(x)|<\frac{\sqrt{x}\log x}{8\pi}
\]

\noindent for all $x\ge 2657$ where $\pi(x)$ denotes the prime-counting function. Plugging in the $n$-th prime $p_n$ and observing that $\pi(p_n)=n$ we get:

\begin{equation}\label{distnlip}
|n-\mathrm{li}(p_n)|<\frac{\sqrt{p_n}\log p_n}{8\pi}.
\end{equation}

\noindent Applying the mean value theorem to $\mathrm{li}(x)$ at the points $p_n$ and $\mathrm{li}^{-1}(n)$, we have
\[
\frac{\mathrm{li}(\mathrm{li}^{-1}(n))-\mathrm{li}(p_n)}{\mathrm{li}^{-1}(n)-p_n}=\mathrm{li}'(x_n)=\frac{1}{\log x_n}
\]

\noindent for some $x_n$ between $\mathrm{li}^{-1}(n)$ and $p_n$. Taking the absolute value of both sides and multiplying through by denominators, we have

\begin{equation}\label{eq2}
|\mathrm{li}^{-1}(n)-p_n|=(\log x_n)|n-\mathrm{li}(p_n))|
\end{equation}

\noindent Combining this with equation (\ref{distnlip}), we have

\begin{equation}\label{2}
|\mathrm{li}^{-1}(n)-p_n|<(\log x_n)\frac{\sqrt{p_n}}{8\pi}\log p_n
\end{equation}

\noindent for $n\ge 2657$. An elementary argument shows that  $t\mapsto \mathrm{li}(2t \log t)-t$ is positive for $t \ge 3$. Since $\mathrm{li}^{-1}(t)$ is strictly increasing on  $(1, \infty)$, this implies that $\mathrm{li}^{-1}(n) < 2n \log n$ for $n\ge 3$.

From \cite[(3.13)]{RS}, for $n\ge 6$,

\[
p_n < n(\log n + \log \log n) < 2n \log n
\]

\noindent Since $x_n$ lies between $\mathrm{li}^{-1}(n)$ and $p_n$, $x_n<2n\log n$ for all $n\ge 3$ as well. Substituting these inequalities into inequality (\ref{2}) above, we get

\[
|\mathrm{li}^{-1}(n)-p_n|<\log(2n\log n)\left|\frac{1}{8\pi}\sqrt{2n\log n}\log(2n\log n))\right|
\]

$$= \frac{\sqrt{2}}{8\pi}\log^2(2n\log n)\sqrt{n \log n}$$

\noindent for all $n\ge 2657$.

\end{proof}


\begin{lemma}\label{RHmainineq} Under the Riemann Hypothesis,
\[
\sqrt{\mathrm{li}^{-1}(n)}-\sqrt{p_n}<a_n \cdot (n\log n)^{1/4}
\]
for all $n> 10^{10}$.
\end{lemma}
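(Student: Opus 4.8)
Unwinding the definition~(\ref{andefinition}) of $a_n$, the claimed inequality says precisely that $\log g(n)<\sqrt{p_n}$ for $n>10^{10}$; the idea is to prove this directly from the two estimates above rather than assume any bound on $g$. The plan is to bound the left-hand side from above using Lemma~\ref{firstlemma} and the right-hand side from below using Theorem~\ref{DN2019}, which reduces the statement to a single elementary inequality comparing a polylogarithmic quantity with a power of $n$.

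For the upper bound I would write
\[
\sqrt{li^{-1}(n)}-\sqrt{p_n}=\frac{li^{-1}(n)-p_n}{\sqrt{li^{-1}(n)}+\sqrt{p_n}}\le\frac{\bigl|li^{-1}(n)-p_n\bigr|}{\sqrt{p_n}},
\]
and then invoke the classical bound $p_n>n\log n$ (Rosser) together with Lemma~\ref{firstlemma} to get, for all $n\ge 2657$,
\[
\sqrt{li^{-1}(n)}-\sqrt{p_n}\le\frac{1}{\sqrt{n\log n}}\cdot\frac{\sqrt 2}{8\pi}\log^2(2n\log n)\sqrt{n\log n}=\frac{\sqrt 2}{8\pi}\log^2(2n\log n).
\]
For the lower bound, observe that $(\log\log n)/\log n$ is decreasing for $n\ge 10^{10}$, so Theorem~\ref{DN2019} gives $a_n\ge C_0$ for all such $n$, where
\[
C_0:=\frac{2-\sqrt 2}{3}-c-\frac{0.43\,\log\log(10^{10})}{\log(10^{10})}>0.09
\]
is an explicit positive constant; hence $a_n\cdot(n\log n)^{1/4}\ge C_0\,(n\log n)^{1/4}$. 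Putting the two estimates together, it suffices to prove
\[
\frac{\sqrt 2}{8\pi}\log^2(2n\log n)<C_0\,(n\log n)^{1/4}\qquad\text{for all }n>10^{10}.
\]

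To close the argument I would substitute $t=n\log n$, an increasing function of $n$, so that it is enough to treat $t>t_0:=10^{10}\log(10^{10})$, and then show that $F(t):=\frac{8\pi C_0}{\sqrt 2}\,t^{1/4}-\log^2(2t)$ is positive at $t_0$ and increasing on $[t_0,\infty)$; the monotonicity reduces to $t^{1/4}>\frac{\sqrt 2}{\pi C_0}\log(2t)$, which holds at $t_0$ with a wide margin and persists since a fourth root eventually dominates a logarithm. I expect this last numerical check to be the only step demanding genuine computation, and there is real room to spare: at $t_0\approx 2.3\times 10^{11}$ one finds
\[
\frac{\sqrt 2}{8\pi}\log^2(2t_0)<41<62<C_0\,t_0^{1/4}.
\]
Apart from keeping careful track of these constants I do not foresee any serious obstacle, since the argument is essentially a direct concatenation of Lemma~\ref{firstlemma} and Theorem~\ref{DN2019}.
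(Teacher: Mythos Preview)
Your proposal is correct and follows essentially the same route as the paper: bound $\sqrt{li^{-1}(n)}-\sqrt{p_n}$ above by a polylogarithmic quantity via Lemma~\ref{firstlemma} and Rosser's theorem, bound $a_n$ below by an explicit positive constant via Theorem~\ref{DN2019}, and then compare. The only differences are cosmetic---you use the conjugate identity where the paper uses the mean value theorem (costing a harmless factor of~$2$ in the constant $\frac{\sqrt 2}{8\pi}$ versus the paper's $\frac{\sqrt 2}{16\pi}$), and you take $C_0>0.09$ where the paper rounds to $0.08$.
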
 

\begin{proof} By the mean value theorem applied to the function $x\mapsto \sqrt{x}$ at the points $\mathrm{li}^{-1}(n)$ and $p_n$, 

\begin{equation}\label{mvt2}
\sqrt{\mathrm{li}^{-1}(n)}-\sqrt{p_n}= \frac{\mathrm{li}^{-1}(n)-p_n}{2\sqrt{x_n}} 
\end{equation}

\noindent for some $x_n$ lying between $\mathrm{li}^{-1}(n)$ and $p_n$. By Rosser's theorem \cite{Ros1939}, $p_n>n\log n$ for all $n$. A simple calculation shows that the mapping $t\mapsto t - \mathrm{li}(t\log t)$ is increasing for $t>e^e$ and positive for $t > 40.5 $ and therefore $\mathrm{li}^{-1}(n)>n\log n$ for all integers $n>40$. Since $x_n$ lies between $p_n$ and $\mathrm{li}^{-1}(n)$, it follows that $x_n>n\log n$, so

$$\sqrt{n\log n} < \sqrt{x_n}$$

\noindent for $n>40$. We therefore have

$$\sqrt{\mathrm{li}^{-1}(n)}-\sqrt{p_n}<\frac{|\mathrm{li}^{-1}(n)-p_n|}{2\sqrt{n\log n}}$$

\noindent for $n>40$ by equation (\ref{mvt2}).

Applying Lemma \ref{firstlemma} to the numerator of the right-hand side of the above, we get 

\begin{equation}\label{lipnbound} 
\sqrt{\mathrm{li}^{-1}(n)}-\sqrt{p_n}< \frac{\sqrt{2}}{16\pi}\log^2(2n\log n)
\end{equation}

\noindent for all $n\ge 2657$.

Using Theorem \ref{DN2019}, we have

\[
 0.14 - \frac{0.43 \log \log n}{\log n} \le \frac{2-\sqrt{2}}{3}-c-\frac{0.43 \log \log n}{\log n} \le a_n
\]

\noindent for all $n\ge 2$. A simple calculation also shows that the left-hand side of the above equation is always larger than 0.08 for $n> 10^{10}$, so

\begin{equation}\label{ineq2}
0.08\cdot (n\log n)^{1/4}< a_n\cdot (n\log n)^{1/4}
\end{equation}
for all $n> 10^{10}$.

Direct calculation also gives

\[
\frac{\sqrt{2}}{16\pi}\log^2(2n\log n)<0.08\cdot (n\log n)^{1/4}
\]

\noindent for all $n>10^{10}$, so combining the above with equation (\ref{ineq2}) we get

\[
\frac{\sqrt{2}}{16\pi}\log^2(2n\log n)<a_n\cdot (n\log n)^{1/4}
\]

\noindent for all $n>10^{10}$. Putting this together with equation (\ref{lipnbound}) above, we get

\[
\sqrt{\mathrm{li}^{-1}(n)}-\sqrt{p_n}<a_n\cdot (n\log n)^{1/4}
\]
\noindent for all $n>10^{10}$.

\end{proof}

\noindent We are now ready to prove Theorem \ref{mainthm}.

\begin{proof}[Proof of Theorem \ref{mainthm}] By taking the logarithm of both sides of the inequality $g(n)\le e^{\sqrt{p_n}}$ and rearranging terms, we obtain the inequality 
\[
\sqrt{p_n}-\log(g(n))>0.
\]
It therefore suffices to show that, under the Riemann Hypothesis, this inequality holds for all $n \ge 1$. We proceed by showing this in two cases, one for $n>10^{10}$, and one for $n\le 10^{10}$.

For $n>10^{10}$, Lemma \ref{RHmainineq} together with the definition of $a_n$ gives us

\[
\sqrt{p_n}-\log(g(n))=\sqrt{\mathrm{li}^{-1}(n)}-\log(g(n))-(\sqrt{\mathrm{li}^{-1}(n)}-\sqrt{p_n})
\]
\[
>\sqrt{\mathrm{li}^{-1}(n)}-\log(g(n))-a_n \cdot (n\log n)^{1/4}=0
\]

\noindent We now consider the case where $n\le10^{10}$. For $n=1$ or 2, one can easily check that $g(n)\le e^{\sqrt{p_n}}$. For $3\le n\le10^{10}$, $p_n \le 2 n \log n \le 2 \times 10^{10} \log(10^{10}) < 10^{14}$ holds. If $m\le 10^{14}$, $ \pi(m) < \mathrm{li}(m)$ by a result of Kotnik \cite{Kotnik} (that has subsequently been proven for all $m$ up to $10^{19}$ by B\" uthe \cite{Buthe}.) Therefore \newline $n = \pi(p_n) < \mathrm{li}(p_n)$, from which it follows that $p_n > \mathrm{li}^{-1}(n)$.

Applying the square roots to both sides of this inequality, we have \\ $\sqrt{p_n}>\sqrt{\mathrm{li}^{-1}(n)}$, so
\[
\sqrt{p_n}-\log(g(n))> \sqrt{\mathrm{li}^{-1}(n)}-\log(g(n))
\]
 for all  $n<10^{10}$.
 
Since $\sqrt{\mathrm{li}^{-1}(n)}>\log(g(n))$ under the Riemann Hypothesis by the central result of \cite{DN2019}, the righthand side of the above is always positive, so 
\[
\sqrt{p_n}-\log(g(n))>0
\]
as required.

\end{proof}


\section{Finding large values of $g(n)$ when the Riemann Hypothesis is false}

\noindent We now turn to the proof of the second part of Theorem \ref{mainthm}. Throughout this section, we let 
$$\Theta=\sup \{\Re(s)~|~ \zeta(s)=0\}$$ 
and we assume $\Theta>\frac{1}{2}$.

Our approach follows the work of Massias, Nicolas, and Robin in \cite{MNR1988}. We begin by recalling some terminology and results from \cite{MNR1988}.  Building on the work of Nicolas in \cite{Nic1968}, the authors of \cite{MNR1988} define a non-decreasing function $\mathbb{R}^+\to \mathbb{N}$ mapping $\eta \mapsto N_\eta$ with the property $N_\eta \in g(\mathbb{N})$ for all $\eta$. Given any $n\in \mathbb{N}$, they define $\rho=\rho(n)$ to be such that 
$$N_\rho =  \max\{N_\eta~|~N_\eta \le g(n)\}.$$ 
They also define $x_1=x_1(n)$ be such that 
$$x_1/\log x_1 = \rho(n).$$ 
We note that $x_1(n)$ and $\rho(n)$ are non-decreasing functions of $n$. Following Nicolas et al., we will leave the dependence of $x_1$ and $\rho$ on $n$ implicit in many of the expressions that follow.

The proof of Theorem \ref{thm2} will require several bounds related to $\rho$, $x_1$ and $N_\rho$ established in \cite{MNR1988}. The first is given by the following lemma, which appears as equation (6) in \cite{MNR1988}. Let $\theta$ and $\psi$ denote Chebyshev's functions  
$$\theta(x)=\sum_{p \le x} \log p$$
and
$${\psi(x)=\sum_{p^k \le x} \log p}.$$

\begin{lemma}\label{logginequality} For $x_1$ and $N_{\rho}$ as above, we have
$$\theta(x_1)\le \log N_\rho\le \psi(x_1).$$
\end{lemma}

\noindent Given a real-valued function $f$ and a positive function $h$, let $f(x) = \Omega_+(h(x))$ denote $\displaystyle{\limsup_{x\to \infty}} \frac{f(x)}{h(x)}>0$. The following lemma follows directly by combining results of \cite{Nic1968}.

\begin{lemma}\label{nufunction} If $\frac{1}{2}<\Theta<1$,
\[
\mathrm{li}(\log^2(g(n))-n = \Omega_{+}\left(x_1^{\Theta+1}/\log x_1\right)
\]
\end{lemma}

\begin{proof} Equation (28) in \cite{MNR1988} gives
\begin{equation}\label{eq28}
\mathrm{li}(\log^2(g(n)))-n = -\left(\Pi_1(x_1)-\mathrm{li}(\psi^2(x_1))\right)+O(x_1^{3/2}/\log x_1)
\end{equation}

\noindent where $\Pi_1(x)=\sum_{p^k \le x} \frac{p^k}{k}$ and $\psi$ is Chebyshev's function $\psi(x)=\sum_{p^k \le x} \log p$. As is pointed out in \cite{MNR1988}, the convexity of the function $t\mapsto \mathrm{li}(t^2)$ for $t \ge e$ implies
\[
\mathrm{li}(\psi^2(x_1))\ge \mathrm{li}(x_1^2)+\frac{x_1}{\log x_1} (\psi(x_1)-x_1)
\]
\noindent for all sufficiently large $x_1$. Substituting this into equation (\ref{eq28}) and rearranging terms, we have
\begin{equation}\label{eqw}
\mathrm{li}(\log^2(g(n))-n \ge \mathrm{li}(x_1^2) - \Pi_1(x_1)+\frac{x_1}{\log x_1} (\psi(x_1)-x_1) +O\left(x_1^{3/2}/\log x_1\right). 
\end{equation}

\noindent From Lemma C part (iii) of \cite{MNR1988}, if $\Theta<1$,
\[
\mathrm{li}(x_1^2)-\Pi_1(x_1)+\frac{x_1}{\log x_1} (\psi(x_1)-x_1)=\Omega_{+}(x_1^{\Theta+1}/\log x_1).
\]
Substituting this into equation (\ref{eqw}), we have
\[
\mathrm{li}(\log^2(g(n))-n \ge \Omega_{+}(x_1^{\Theta+1}/\log x_1) +O\left(x_1^{3/2}/\log x_1\right). 
\]
Since $\Theta+1>3/2$ by assumption, $x_1^{\Theta+1}/\log x_1$ dominates the $O\left(x_1^{3/2}/\log x_1\right)$ term, so we obtain 
\[
\mathrm{li}(\log^2(g(n))-n = \Omega_{+}(x_1^{\Theta+1}/\log x_1)
\]
as required.
\end{proof}

\noindent A third result we will need is that any element of the image $g(\mathbb{N})$ of Landau's function is close to an element of the image of $\rho\mapsto N_\rho$ as given by the following lemma (equation (11) from \cite{MNR1988}):

\begin{lemma}\label{nearestnrho} With $N_\rho$ as above, 
\[
\log(g(n)) =\log N_\rho+O(\log x_1).
\]
\end{lemma}

\noindent The final ingredients we will need are bounds on the error term in the prime number theorem. To this end, we consider the function
\[
R(x)= \sup_{e \le s \le x} |\pi(s)- \mathrm{li}(s)|,
\]
which, as we will show, satisfies the following bound.

\begin{lemma}\label{Rissublinear}  For all $x\in[e, \infty)$ and $b \in \mathbb{R}^+$, 
$R(x+b) \le R(x) + 2(b+ e + 1)$
\end{lemma}

\noindent The proof of Lemma \ref{Rissublinear} requires the following elementary lemma about the growth of the absolute value of the difference between two positive, monotone increasing functions that satisfy a sublinearity condition.

\begin{lemma}\label{sublinfixed}
Let $L\ge 0$ and let $f_1$ and $f_2$ be positive, monotone increasing functions such that for all $x\in [L,\infty)$ and $i\in \{1,2\}$
\begin{itemize}
\item $f_i(x) \le x$
\item there exists a constant $C>0$ such that for all $b\in \mathbb{R}^+$, \\ $f_i(x+b) - f_i(x)\le b+C$
\end{itemize}
Then $h(x) =\displaystyle \sup_{L\le s \le x }|f_1(s)-f_2(s)|$ satisfies 
\[
h(x+b) \le h(x) + 2(b + C + L )
\]
for all $x\in [L,\infty)$ and $b\in \mathbb{R}^+$.

\end{lemma}

\begin{proof}

Let $k(s) = |f_1(s)-f_2(s)|$, so $h(x)=\displaystyle \sup_{L\le s\le x} k(s)$. Since
\[
h(x+b) =  \sup_{L\le s\le x+b} k(s) = \max\left\{\sup_{L\le s\le L+b} k(s) ,\sup_{L+b \le s \le x+b} k(s) \right\},
\]
it suffices to prove
\begin{equation}\label{ineqeasy}
\sup_{L\le s\le L+b} k(s) \le  h(x) + 2(b + C + L )
\end{equation}
and
\begin{equation}\label{ineqhard}
\sup_{L+b \le s \le x+b} k(s) \le  h(x) + 2(b + C + L ).
\end{equation}

\noindent Inequality (\ref{ineqeasy}) follows from the sublinearity assumptions on $f_i$, since
\[
\sup_{L\le s \le  L+b} k(s) =  \sup_{L\le s \le L+b} |f_1(s)- f_2(s)| \le  \sup_{L\le s \le L+b} |f_1(s)| + \sup_{L\le s \le L+b} |f_2(s)|
\]
\[
\le \sup_{L\le s \le L+b} s + \sup_{L\le s \le L+b} s 
= 2(L+b) \le h(x) + 2(b+C+ L).
\]
\noindent For inequality (\ref{ineqhard}), the positivity and monotonicity of $f_i$ together with the bound $f_i(x+b) - f_i(x)\le b+C$ give us
\[
k(s+b) = |f_1(s+b) -f_2(s+b)| 
\]
\[
= |f_1(s+b) -f_1(s) - (f_2(s+b) - f_2(s)) + f_1(s) -f_2(s) |
\]
\[
\le  |f_1(s+b) -f_1(s)| + |f_2(s+b) - f_2(s)| + |f_1(s) -f_2(s)|
\]
\[
= f_1(s+b) -f_1(s)~+~f_2(s+b) - f_2(s)~ +~ k(s)
\]
\[
\le 2(b+C) +  k(s).
\]
\noindent Taking the supremum over $s\in [L , x]$ of both sides of this inequality, we have 
\[
\sup_{L\le s \le x} k(s+b) \le \sup_{L\le s\le x} k(s) +2(b+C) = h(x) + 2(b+C) \newline \le h(x)+ 2(b+C+L)
\]
\noindent Since 
\[
 \sup_{L\le s \le x} k(s+b) = \sup_{L+b\le s \le x+b} k(s) 
\]
this establishes inequality (\ref{ineqhard}).
\end{proof}

\noindent We now use Lemma \ref{sublinfixed} to prove Lemma \ref{Rissublinear}


\begin{proof}[Proof of Lemma \ref{Rissublinear}] By Lemma \ref{sublinfixed}, it suffices to check that $\mathrm{li}(x)$ and $\pi(x)$ satisfy the assumptions of the lemma on $[e,\infty)$ with constants $C=1$ and $L=e$. Monotonicity and positivity follow easily from the definition, as does $\pi(x)\le x$ and $\mathrm{li}(x)\le x$ for $x\ge 2$.

To see that $\pi(x+b)\le \pi(x) + b+1$, note that $\pi(x+b)-\pi(x)$ counts the number of primes in the interval $(x, x+b]$. Since this interval contains at most $b+1$ integers, the result follows.

\noindent For $\mathrm{li}(x)$, we have that for all $x\ge e$ and $b\ge 0$,  

\[
\mathrm{li}(x+b) = \mathrm{li}(x) + \int_x^{x+b} \frac{ds}{\log s}
\]
\[
\le \mathrm{li}(x) + \int_x^{x+b} 1\cdot ds =  \mathrm{li}(x)+b \le \mathrm{li}(x) + b+ 1
\]

\noindent since $1/\log s <1$ on $[e,\infty)$.

\end{proof}


\begin{lemma}\label{errorbound} If $1/2<\Theta<1$, then
$R(\log^2(g(n)))=O(x_1^{2\Theta} \log x_1).$
\end{lemma}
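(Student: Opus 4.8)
The plan is to bound $R(\log^2(g(n)))$ by first relating $\log^2(g(n))$ to $x_1^2$ and then invoking the classical estimate for the error term $R(x)$ in the prime number theorem in terms of $\Theta$. Concretely, I would proceed as follows. First, recall from Lemma \ref{logginequality} that $\theta(x_1) \le \log(N_\rho) \le \psi(x_1)$, and from Lemma \ref{nearestnrho} that $\log(g(n)) = \log(N_\rho) + O(\log(x_1))$. Since $\theta(x_1) = x_1 + O(x_1^{\Theta}\log^2 x_1)$ (or even just $\theta(x_1) \sim x_1$ and $\psi(x_1) \sim x_1$ suffice for the leading-order claim), we get $\log(g(n)) = x_1 + O(x_1^{\Theta}\log^2 x_1) = x_1(1 + o(1))$, and hence $\log^2(g(n)) = x_1^2 + O(x_1^{1+\Theta}\log^2 x_1)$. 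In particular $\log^2(g(n)) \le x_1^2 + C x_1^{1+\Theta}\log^2 x_1$ for some constant $C$ and all large $n$.

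Next I would apply Lemma \ref{sublin} with $a = x_1^2$ and $b = C x_1^{1+\Theta}\log^2 x_1$: this gives
\[
R(\log^2(g(n))) \le R(x_1^2) + C x_1^{1+\Theta}\log^2 x_1 + 1.
\]
(If $\log^2(g(n))$ happens to be smaller than $x_1^2$ there is nothing to do, since $R$ is monotone; one can always apply Lemma \ref{sublin} to whichever of $x_1^2$, $\log^2(g(n))$ is larger.) The remaining task is to bound $R(x_1^2)$. Here I would invoke the standard fact that, by partial summation from the zero-free region / explicit formula, $\Theta < 1$ implies $\pi(x) - Li(x) = O(x^{\Theta}\log x)$, so $R(x) = O(x^{\Theta}\log x)$. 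Applying this with $x = x_1^2$ yields $R(x_1^2) = O(x_1^{2\Theta}\log(x_1^2)) = O(x_1^{2\Theta}\log x_1)$. Since $2\Theta > 1 + \Theta$ (because $\Theta > 1$... no: because $\Theta > 1$ is false — rather because $\Theta > 1/2$ forces $2\Theta > 1 + \Theta$ is equivalent to $\Theta > 1$, which is false). Let me restate: since $\Theta > 1/2$ we have $2\Theta > 1$, and comparing exponents $2\Theta$ versus $1 + \Theta$, we have $2\Theta \ge 1 + \Theta \iff \Theta \ge 1$; so in fact for $\Theta < 1$ the term $x_1^{1+\Theta}\log^2 x_1$ \emph{dominates} $x_1^{2\Theta}\log x_1$. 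This means the naive split actually gives the bound $O(x_1^{1+\Theta}\log^2 x_1)$, which is slightly weaker than the claimed $O(x_1^{2\Theta}\log x_1)$.

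To recover the sharper exponent $2\Theta$, I would need to be more careful about the size of $\log^2(g(n)) - x_1^2$: rather than crudely bounding $\psi(x_1) - x_1$ by $O(x_1^\Theta \log^2 x_1)$, note that $\log(g(n)) - x_1 = (\log(g(n)) - \log N_\rho) + (\log N_\rho - x_1)$, where the first term is $O(\log x_1)$ by Lemma \ref{nearestnrho} and the second is bounded by $\psi(x_1) - \theta(x_1) + |\theta(x_1) - x_1|$; here $\psi(x_1) - \theta(x_1) = O(\sqrt{x_1})$ and $|\theta(x_1) - x_1| = R$-type error which is $O(x_1^\Theta \log^2 x_1)$. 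So $\log^2(g(n)) - x_1^2 = O(x_1^{1+\Theta}\log^2 x_1)$ seems unavoidable with these inputs, and the paper must intend something sharper — most likely they only need, and only claim, the bound good enough for the subsequent argument, where the relevant comparison is against the $\Omega_+(x_1^{\Theta+1}/\log x_1)$ term of Lemma \ref{nufunction}. I expect the intended reading is that $R(\log^2(g(n))) = O(x_1^{2\Theta}\log x_1)$ is stated with the understanding that $x_1^{2\Theta}$ should really be the dominant power coming from $R(x_1^2)$, and the $x_1^{1+\Theta}$ contribution from the sublinearity step is either absorbed or separately shown to be lower order in the application; so the main obstacle I anticipate is reconciling the exponent bookkeeping, and I would resolve it by proving the two-sided estimate $\log^2(g(n)) = x_1^2 + O(x_1^{1+\Theta}\log^2 x_1)$ carefully and then, if needed, weakening the lemma's statement to $R(\log^2(g(n))) = O(x_1^{1+\Theta}\log^2 x_1)$, which is still $o(x_1^{1+\Theta}/\log x_1 \cdot \text{(anything)})$... — in any case, strictly weaker than the $\Omega_+$ main term only if one is careful, so the genuinely delicate point is checking that this error term does not swallow the $\Omega_+$ signal.
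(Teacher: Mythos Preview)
Your approach has a genuine gap that you yourself identify but do not resolve. You split at $x_1^2$, applying Lemma~\ref{sublin} with $a=x_1^2$ and $b=\log^2(g(n))-x_1^2=O(x_1^{1+\Theta}\log^2 x_1)$; as you correctly observe, for $\Theta<1$ this additive $b$ dominates the $O(x_1^{2\Theta}\log x_1)$ coming from $R(x_1^2)$, so you only obtain $R(\log^2(g(n)))=O(x_1^{1+\Theta}\log^2 x_1)$. Your proposed fix of weakening the lemma to that bound does not work: $x_1^{1+\Theta}\log^2 x_1$ is \emph{not} $o(x_1^{\Theta+1}/\log x_1)$, so in the proof of Theorem~\ref{thm2} this error term really would swallow the $\Omega_+$ signal from Lemma~\ref{nufunction}.

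The paper avoids this by splitting at a different point. Use Lemma~\ref{sublin} only for the short step from $\log^2(g(n))$ to $\log^2(N_\rho)$: since $\log(g(n))-\log(N_\rho)=O(\log x_1)$ by Lemma~\ref{nearestnrho} and $\log(N_\rho)\le\psi(x_1)=O(x_1)$, the additive cost here is merely $O(x_1\log x_1)$, which is negligible against $x_1^{2\Theta}\log x_1$ because $\Theta>1/2$. For the remaining step, do not use Lemma~\ref{sublin} at all: by monotonicity of $R$ together with $\log(N_\rho)\le\psi(x_1)$ and $\psi(x_1)\le Cx_1$ one has
\[
R(\log^2(N_\rho))\le R(\psi^2(x_1))\le R(C^2 x_1^2),
\]
and then the estimate $R(x)=O(x^{\Theta}\log x)$ applied directly at $C^2 x_1^2$ gives $O(x_1^{2\Theta}\log x_1)$. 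The idea you were missing is that one never needs to control the difference $|\psi^2(x_1)-x_1^2|$; it suffices that $\psi^2(x_1)=O(x_1^2)$, since the bound $R(x)=O(x^{\Theta}\log x)$ is stable under multiplying its argument by a constant.
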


\begin{proof}

\noindent Let $N_\rho$ be the largest element of the image of the map $\rho\mapsto N_\rho$ less than $g(n)$ as above. By Lemma \ref{nearestnrho},
\[
\log(g(n))\le \log N_\rho + C\log x_1
\]
\noindent for some $C>0$, so 
\[
\log^2(g(n))\le \log^2(N_\rho) + 2C(\log N_\rho)(\log x_1)+C^2\log^2(x_1).
\]
\noindent By the monotonicity of $R$ and Lemma \ref{Rissublinear}, we have 
\[
R(\log^2(g(n)))\le R( \log^2(N_\rho) + 2C(\log N_\rho)(\log x_1)+C^2\log^2(x_1))
\]
\[
 \le R( \log^2(N_\rho)) + 2(2C(\log N_\rho)(\log x_1)+C^2\log^2(x_1)+e+1).
\]
\noindent By Lemma \ref{logginequality}, we have that the above is bounded by
\[
R(\psi^2(x_1)) + 4C\psi(x_1)\log x_1+2C^2\log^2(x_1)+ 2e + 2 = R(\psi^2(x_1))+O(x_1 \log x_1)
\]
\noindent since $\psi(x_1)=O(x_1)$ by the prime number theorem. Since $x_1\log x_1$ is negligible relative to $x_1^{2\Theta}\log x_1$ when $\Theta>1/2$, it therefore suffices to show that $R(\psi^2(x_1))=O(x_1^{2\Theta}\log x_1)$.

Since $\Theta<1$ by assumption, we have that $R(x_1) = O(x_1^\Theta \log x_1)$ (cf \cite{In1990} Theorem 30). Using $\psi(x_1)=O(x_1)$ once again,
\[
R(\psi^2(x_1)) = O(R(x_1^2))= O(x_1^{2\Theta} \log x_1)
\]
\noindent as required.

\end{proof}

\noindent We are now ready to prove Theorem \ref{thm2}, i.e., if $\Theta>\frac{1}{2}$, there exists an integer $n$ such that $g(n)> e^{\sqrt{p_n}}$. 

\begin{proof}[Proof of Theorem \ref{thm2}]

\noindent Since the exponential, square-root, and prime counting function $\pi$ are all monotone, the conclusion of Theorem \ref{thm2} is equivalent to the statement  
\[
\exists n, ~\pi(\log^2(g(n)))-n>0.
\]

\noindent Let $N_\rho$ be the largest element of the image of the map $\rho\mapsto N_\rho$ less than $g(n)$. Then
\[
\pi(\log^2(g(n)))-n =\mathrm{li}(\log^2(g(n)))-n+\pi(\log^2(g(n)))-\mathrm{li}(\log^2(g(n)))
\]
\begin{equation}\label{eq1}
\ge \mathrm{li}(\log^2(g(n)))-n- R(\log^2(g(n))).
\end{equation}
\noindent By Lemma \ref{nufunction}, we have
\[
\mathrm{li}(\log^2(g(n)))-n = \Omega_{+} (x_1^{\Theta+1}/\log x_1)
\]
\noindent and by Lemma \ref{errorbound}, we have 
\[
R(\log^2(g(n)))=O(x_1^{2\Theta} \log x_1).
\]
\noindent Applying this to equation (\ref{eq1}) we have
\[
\pi(\log^2(g(n)))-n \ge  \Omega_{+} (x_1^{\Theta+1}/\log x_1) - O(x_1^{2\Theta} \log x_1).
\]
\noindent Since $\Theta<1$, $2\Theta< \Theta+1$, so $R(\psi^2(x_1))$ is negligible relative to $x_1^{\Theta+1}/\log x_1$ and therefore 
\[
\pi(\log^2(g(n)))-n \ge  \Omega_{+} (x_1^{\Theta+1}/\log x_1).
\]
\noindent It follows that $\pi(\log^2(g(n))-n$ must take a positive value for some $n$.

\end{proof}


\end{document}